\newtheorem{thm}{Theorem}
\newtheorem{cor}{Corollary}
\theoremstyle{definition}
\theoremstyle{remark}
\newtheorem{rem}{Remark}
\begin{document}

\title[Poisson-Type Local Limit Theorems]{Local Limit Theorems for Poisson's Binomial in the Case of Infinite Expectation}

\date{}

\author{Italo Simonelli\(^\dagger\), Lucia D. Simonelli\(^\star\)}
\thanks{\(\dagger\) McDaniel College,   Department of Mathematics and Computer Science, Westminster, MD, USA,
isimonelli@mcdaniel.edu}
\thanks{\(\star\) Abdus Salam International Centre for Theoretical Physics, Trieste, Italy, 
lucia.simonelli@ictp.it}
  
 \maketitle

\bigskip

\begin{abstract}
 Let $ V_{n} = X_{1,n} + X_{2,n} + \cdots + X_{n,n}$ where $X_{i,n}$ are Bernoulli random variables which take the value $1$ with probability $b(i;n)$. Let $\lambda_{n} = \sum\limits_{i=1}^{n} b(i;n) $, $\lambda = \lim\limits_{n \to \infty} \lambda_n,$ and $m_n = \max\limits_{1 \leq i \leq n} b(i;n)$. We derive asymptotic results for  $P(V_{n}=k)$ that hold without assuming that  $\lambda < +\infty$ or $m_n \to 0$.   Also,  we do not assume $k$ to be fixed, but instead, our results hold uniformly for all $k$ which satisfy particular growth conditions with respect to $n$. These results extend known Poisson local limit theorems to the case when $\lambda = +\infty$. While our results apply to triangular arrays, without the assumption that \(m_n \to 0\) they continue to hold  for sums of Bernoulli random variables.   In this setting, our growth conditions cover a range of values for $k$ not centered at $\lambda_n$, thus complementing  known local limit theorems based on approximation by the normal distribution. In addition, we show that our local limit theorems apply to a scheme of dependent random variables introduced in the work of Sevast'yanov.

\end{abstract}



\section{Introduction}

\subsection{Preliminaries}

 Let $X_{i,n}$ be Bernoulli random variables which take the value $1$ with probability $b(i,n)$; the $X_{i,n}$'s are not assumed to be identically distributed. Let

\begin{equation} \label{v}
V_{n} = X_{1,n} + X_{2,n} + \cdots + X_{n,n}
\end{equation}

 and
\[
b(i_{1},i_{2},...,i_{k};n) = P(X_{i_{1},n} = X_{i_{2},n}= \cdots = X_{i_{k},n}=1).
\]

 We denote

\begin{equation} \nonumber 
 \lambda_{n}=\sum_{i=1}^{n} b(i;n), 
 \qquad \lambda = \lim_{n \to \infty}\lambda_n,
\end{equation}

\[
\alpha_n = \sum_{i=1}^n \ln \left(1-b(i;n)\right),
 \qquad \beta_n = \sum_{i=1}^n \frac{b(i;n)}{1-b(i;n)}, \qquad m_n = \max_{1 \leq i \leq n} b(i;n).
\]

In this paper we derive Poisson-type local limit theorems for $P(V_n = k)$ that hold  when the $X_{i,n}$ are assumed to be independent Bernoulli random variables and  $\lambda = +\infty$. In particular we give sufficient conditions under which 
  \vspace{.3cm}
 
  $$P(V_n = k)\sim P(V_n =0)\, \frac{\lambda_n^k}{k!}$$
 or 
  $$P(V_n = k)\sim P(V_n =0)\, \frac{\beta_n^k}{k!}.$$
  Under additional assumptions  we also prove exact Poisson asymptotic behavior
  \begin{equation*}
 P\left(V_{n}=k\right) \sim e^{-\lambda_{n}}\, \frac{\lambda_{n}^{k}}{k!}.
\end{equation*}
While our results apply to triangular arrays, when no assumption on  $m_n \to 0$ is made, they continue to hold   for sums of Bernoulli random variables.  Also,  we do not assume $k$ to be fixed, but instead, our results hold uniformly for all $k$ which satisfy particular growth conditions with respect to $n$.  These growth conditions are valid on intervals that are not contained in those for which (\ref{intro-2}) or (\ref{intro-1}) hold, thus  complementing known local limit theorems based on approximation by the normal distribution and providing new results in the setting of sums of Bernoulli random variables. 

In addition, we show that our  local limit theorems apply to a scheme of dependent random variables introduced in the work of  \cite{Seb}, in which local limit theorems were derived under assumptions $\lambda < +\infty$ and $m_n \to 0$.

\vspace{.2cm}

In \ref{sec:setting} we provide the relevant background and motivation for this work. Section \ref{sec:theorems} contains the theorems and proofs.


\subsection{The setting and background}
\label{sec:setting}

The problem of approximating the limiting local behavior of the distribution of $V_n$, known as Poisson-binomial or Poisson's Binomial distribution, has a long history in probability. 
This problem has been studied in two main directions: 

\begin{eqnarray*}
\quad & (a)\; & \lambda = +\infty \qquad   \mbox{with  a Gaussian limit}\\[.5cm]
\quad &  (b) \;&   0 < \lambda \leq +\infty \qquad   \mbox{with a (asymptotic) Poisson limit.}
\end{eqnarray*}

 The first local limit result for  case (a) is the well known  De Moivre-Laplace Theorem (1795), see \cite{Gorroochurn} for a historical account. This theorem states that if the $X_{i,n}$ are independent and identically distributed Bernoulli random variables, with $0 < p < 1$, then as $n\rightarrow +\infty$,
\begin{equation} \label{intro-2}
\sup_{k: |k-np| \leq\varphi(n)} \left|\frac{P\left(V_n=k\right)}{\frac{1}{\sqrt{2\, \pi n p (1-p)}}e^{-(k-np)^2/(2np(1-p))}}-1\right| \longrightarrow 0
\end{equation}
for $\varphi(n) = o(np(1-p))^{2/3}$.  For a proof see \cite{Shiryaev}, for example.

Several extensions of  (\ref{intro-2}) without the assumption that the $X_{i,n}$ are identically distributed have been proposed. For example, let  $b(i;n) = p_i, 1-p_i = q_i$, and $B_n^2 =Var (V_n)$.  Under the assumption that 
$0 < p_i < 1$,  \cite{MMM}  proved the existence of an absolute constant $C$ such that

\begin{equation} \label{intro-1}
\left|B_n P\left(V_n =k\right) - (\sqrt{2\pi})^{-1}\, e^{-\frac{(k-\lambda_n)^2}{2 B_n^2}}\right| < C\, \frac{\sum_{i=1}^n p_i q_i\left(p_i^2 + q_i^2\right)}{B_n^3}.
\end{equation}
Further extensions which cover arbitrary lattice distributions can be found in  \cite{Gnedenko}, \cite{rita} and \cite{Petrov}, for example.  In such results, however, as in (\ref{intro-1}), the differences of the probabilities are considered instead of the quotients. 

It is important to note that the asymptotic  ratio of probabilities cannot be recovered from their asymptotic differences when each term in the difference goes to zero.  In fact, if one is interested  in ratios, and the random variables are i.i.d. Bernoulli, (\ref{intro-1}) is not an improvement over (\ref{intro-2}). By using the notation in (\ref{intro-2}), expression (\ref{intro-1}) reduces to 
\begin{equation} \label{intro-3}
e^{-\frac{(k-np)^2}{2np(1-p)}} \left|\frac{P\left(V_n=k\right)}{\frac{1}{\sqrt{2\, \pi n p (1-p)}}e^{-(k-np)^2/(2np(1-p))}}-1\right| < \frac{C'}{\sqrt{n p (1-p)}},
\end{equation}
where $C' \geq  C/4.$  It follows from (\ref{intro-3}) that if 
$$|k-np| > \sqrt{np(1-p)}\, \sqrt{\ln (np(1-p))}$$
the factor in front of the absolute value in (\ref{intro-3}) is eventually less than the upper bound, thus giving no useful information on the size of the ratio inside the absolute value. Hence (\ref{intro-1}) does not imply (\ref{intro-2}). 

Note that in general the  estimate in (\ref{intro-3}) cannot be improved (see \cite{Petrov}, p. 197).

Case (b), concerning Poisson limits,  was first investigated under the assumption that the $X_{i,n}$ are independent and $\lambda < +\infty$. In this setting it is well known that $V_n$ converges pointwise to a Poisson random variable with parameter  $\lambda $ if and only if 
$$(i^a)  \,    \lim\limits_{n \to +\infty} m_n = 0 \quad \mbox{ or } \quad    (i^b) \, \lim\limits_{n \to +\infty} \sum_{i=1}^n b^{2}(i;n) = 0,  $$
see \cite{wang}. 

 In general, condition $(i^b)$ implies condition $(i^a)$; in the case that $\lambda < +\infty$, conditions $(i^a)$ and $(i^b)$ are actually equivalent.

 Pointwise convergence of $V_n$ to a Poisson random variable $T$ as well as local limit theorems can be obtained by showing that the variation distance goes to $0$ as $n$ goes to $\infty$.  The variation distance, $D$, is given by

\begin{eqnarray}   \nonumber 
D = \sup_{k\geq 0} \left | P(V_{n} \leq k) - P(T \leq k) \right |.
\end{eqnarray}



 In this setting, one of the first results for pointwise convergence can be found in \cite{von}. Subsequently, various methods were employed to improve rates of convergence, or equivalently, to find tighter upper bounds for $D$, see for example \cite{Pro}, \cite{lecam}, \cite{kerstan}, \cite{verv}, \cite{simjohn}, and \cite{wang}.
 
 Poisson approximation to $V_n$ in the case when the $X_{i,n}$ are dependent has also been the object of much investigation and several approaches have been proposed; see for example \cite{hodgescam}, \cite{free}, \cite{chen}, \cite{barbour}, and \cite{arratia}. 

The case of  approximating $ P(V_n =k)$ by a Poisson random variable $T_n$ with parameter $\lambda_n$, when $\lim_{n\rightarrow +\infty} \lambda_n = +\infty$, has been considered by a smaller number of researchers; it has generated very interesting  results. 
 
  \cite{dehpfeif}, show that under condition $(i^a)$, 

\begin{equation} \label{Intro-4}
D \sim \frac{1}{\sqrt{2 \pi e}  }\frac{\sum\limits_{i=1}^{n}b^{2}(i;n)}{\lambda_n}.
\end{equation}

Much broader work has been done  by Hwang, and Kowalski and Nikeghbali. Hwang  derived a unified scheme for Poisson approximation to discrete random variables. These results are based on the assumption that the generating function of the random variables satisfies certain asymptotic regularity conditions, see \cite{Hwang}. However these results are approximation theorems, not limit theorems. 

Kowalski and Nikeghbali opened a new direction in the theory of convergence of random variables by defining a new type of convergence, called mod Poisson convergence \cite{Kowalski}. Limited to our setting however, the general asymptotic estimates their theory provides are comparable to (\ref{Intro-4}), which again do not translate into meaningful bounds for the ratio of probabilities for the range of values we consider in this work.

\section{Statement of Theorems and Proofs}
\label{sec:theorems}

\subsection{Independent Case.} We begin by considering the case of independent random variables and obtain a local limit theorem that holds for $\lambda \leq +\infty$.

\begin{thm} \label{Independent_1}
Let $V_{n}$ be random variable as defined in (\ref{v}) where the $X_{i,n}$ are independent, and  ${\displaystyle \lim_{n\rightarrow +\infty} \lambda_n = \lambda}$, $0 < \lambda \leq +\infty$.  

If 
\begin{enumerate}
\item[(A1)]
\begin{equation*}
\lim_{n\rightarrow +\infty}m_n = 0,
\end{equation*}
\end{enumerate}
then for any function $\phi(n)$ such that $\phi(n)\, m_n \to 0$ as $n \to +\infty$, 
 $$ \sup_{\{k:\,k^2\leq \> \phi(n)\}} \left|\frac{P\left(V_n = k\right)}{P(V_n=0) \lambda_n^{k}/k!} - 1 \right|\longrightarrow 0.
 $$
\end{thm}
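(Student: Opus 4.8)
The plan is to pass from probabilities to elementary symmetric polynomials, where the Poisson shape $\lambda_n^{k}/k!$ emerges on the nose, and then to control the resulting ratio by a single power-sum correction. Writing $c_i := b(i;n)/(1-b(i;n))$ (well defined for $n$ large, since $m_n\to 0$ forces $b(i;n)<1$ eventually), independence gives $\prod_{i\in S}b(i;n)\prod_{j\notin S}(1-b(j;n)) = \big(\prod_{i\in S}c_i\big)\,P(V_n=0)$, hence
\[
\frac{P(V_n=k)}{P(V_n=0)} = \sum_{|S|=k}\prod_{i\in S}c_i =: e_k ,
\]
the $k$-th elementary symmetric polynomial in $c_1,\dots,c_n$. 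Consequently the quantity to be estimated is exactly $P(V_n=k)/\big(P(V_n=0)\lambda_n^{k}/k!\big)=k!\,e_k/\lambda_n^{k}$, and the whole theorem reduces to showing that $k!\,e_k/\lambda_n^{k}\to 1$ uniformly over $k^2\le\phi(n)$.

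I would then factor $k!\,e_k/\lambda_n^{k} = \big(k!\,e_k/\beta_n^{k}\big)\big(\beta_n/\lambda_n\big)^{k}$, noting that the first power sum satisfies $p_1:=\sum_i c_i=\beta_n$. For the second factor, from $\beta_n=\lambda_n+\sum_i b(i;n)^2/(1-b(i;n))$ one gets $1\le \beta_n/\lambda_n\le 1+m_n/(1-m_n)$, so $1\le(\beta_n/\lambda_n)^{k}\le \exp\!\big(k\,m_n/(1-m_n)\big)$; since $k\le\sqrt{\phi(n)}$ and $\sqrt{\phi(n)}\,m_n=\sqrt{(\phi(n)m_n)\,m_n}\to 0$ by (A1) and $\phi(n)m_n\to 0$, this factor tends to $1$ uniformly.

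The heart of the argument is a two-sided bound on $k!\,e_k/\beta_n^{k}$. Expanding $\beta_n^{k}=p_1^{k}=\sum_{(i_1,\dots,i_k)}c_{i_1}\cdots c_{i_k}$ over all $k$-tuples, the tuples with distinct entries contribute $k!\,e_k$ and every term is nonnegative, giving the upper bound $k!\,e_k\le\beta_n^{k}$. For the lower bound, the defect $\beta_n^{k}-k!\,e_k$ is the sum over tuples with at least one repeated index, which I would dominate by a union bound over the $\binom{k}{2}$ choices of a coincident pair of positions: for a fixed pair the sum is $\sum_i c_i^2\,\beta_n^{k-2}=p_2\,\beta_n^{k-2}$, whence $\beta_n^{k}-k!\,e_k\le\binom{k}{2}p_2\,\beta_n^{k-2}$ with $p_2:=\sum_i c_i^2$. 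Using $p_2\le(\max_i c_i)\,\beta_n\le \frac{m_n}{1-m_n}\beta_n$ yields
\[
1\ \ge\ \frac{k!\,e_k}{\beta_n^{k}}\ \ge\ 1-\binom{k}{2}\frac{p_2}{\beta_n^{2}}\ \ge\ 1-\frac{k^2}{2}\cdot\frac{m_n}{(1-m_n)\,\beta_n}.
\]

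To conclude I would combine the two factors with the hypotheses. Because $0<\lambda\le+\infty$, we have $\liminf_n\beta_n\ge\liminf_n\lambda_n=\lambda>0$, so $\beta_n$ is bounded below by a positive constant for large $n$; with $k^2\le\phi(n)$ the correction obeys $\tfrac{k^2}{2}\cdot\frac{m_n}{(1-m_n)\beta_n}\le \frac{\phi(n)\,m_n}{2(1-m_n)\beta_n}\to 0$, uniformly in $k$. Thus both $k!\,e_k/\beta_n^{k}$ and $(\beta_n/\lambda_n)^{k}$ converge to $1$ uniformly over $\{k:k^2\le\phi(n)\}$, and the desired supremum tends to $0$. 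The main obstacle is precisely the lower bound, i.e. controlling the defect $\beta_n^{k}-k!\,e_k$: the clean combinatorial union bound over coincident index pairs, which collapses everything to the single power sum $p_2$ and hence to $m_n$, is the crux; the upper bound and the passage from $\beta_n$ to $\lambda_n$ are then routine once $\phi(n)m_n\to 0$ is used together with $\liminf_n\beta_n>0$.
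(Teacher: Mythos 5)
Your proposal is correct, but its core estimate is genuinely different from the paper's. You share one ingredient with the paper: the exact identity $P(V_n=k)=P(V_n=0)\,e_k$, where $e_k$ is the elementary symmetric polynomial in $c_i=b(i;n)/(1-b(i;n))$ --- the paper uses this representation too, but only for its upper bound (and throughout its Theorem \ref{Independent_2}). The divergence is in how the symmetric function is compared to a $k$-th power. The paper peels off one index at a time via the recursion $\sum_{|B|=k}\prod_{i\in B}b(i;n)=\frac1k\sum_{|A|=k-1}\prod_{i\in A}b(i;n)\,\bigl(\lambda_n-\sum_{j\in A}b(j;n)\bigr)$, which sandwiches the symmetric function in the $b$'s between $\prod_{j=0}^{k-1}(\lambda_n-jm_n)/k!$ and $\lambda_n^k/k!$; its lower bound works directly with the $b$'s (using $\prod_{j\in B^c}(1-b(j;n))\ge P(V_n=0)$) and its upper bound with the $c$'s, so the two sides are handled asymmetrically, producing the errors $\epsilon_1=k^2m_n/\lambda_n$ and $\epsilon_2=km_n/(1-km_n)$. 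You instead expand $\beta_n^k=p_1^k$ over $k$-tuples and control the defect $\beta_n^k-k!\,e_k$ by a union bound over coincident pairs of positions, obtaining $\binom{k}{2}p_2\,\beta_n^{k-2}$, and only at the end convert from $\beta_n$ to $\lambda_n$ via $1\le\beta_n/\lambda_n\le 1+m_n/(1-m_n)$ together with $\sqrt{\phi(n)}\,m_n\to 0$. This treats both bounds symmetrically in the $c$'s, collapses the error to the single power sum $p_2$, and in effect proves Theorem \ref{Independent_1} by first establishing the $\beta_n$-centered statement of the paper's Theorem \ref{Independent_2} and then transferring it --- a tidy unification of the two results that the paper keeps separate. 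The price is the extra intermediate quantity $\beta_n$ and the need to note $\beta_n\ge\lambda_n$ is bounded away from $0$; the payoff is a single clean combinatorial lemma in place of two asymmetric recursions, with error terms ($k^2p_2/\beta_n^2$ plus the conversion factor) of the same order as the paper's. All your auxiliary steps (well-definedness of $c_i$ for large $n$, the union-bound count, the uniformity over $k^2\le\phi(n)$) check out.
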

\begin{proof}
Suppose $k \geq 1$, and let $\phi(n)$ be such that $\phi(n)\, m_n \rightarrow 0$ as $n\rightarrow +\infty$. Then
\begin{equation}
\label{ind_2}
P\left(V_n = k\right)  =  \sum_{\substack{B\subset \{1,2,\cdots , n\}\\ |B| = k}} \prod_{i\in B} b(i;n)\, \prod_{j\in B^c} (1-b(j;n)) 
\end{equation}
\[
 \hspace{1cm} \geq  P\left(V_n =0\right)\sum_{\substack{B\subset \{1,2,\cdots , n\}\\ |B| = k}} \prod_{i\in B} b(i;n).
\]
We rewrite the sum in the above lower bound as follows,
\begin{eqnarray} 
\sum_{\substack{B\subset \{1,2,\cdots , n\}\\ |B| = k}}  \prod_{i\in B} b(i;n)
& = & 
   \sum_{\substack{A\subset \{1,2,\cdots , n\}\\ |A| = k-1}} \prod_{i\in A} b(i;n) \frac{\sum_{j\in A^c} b(j;n)}{k}
\nonumber\\
& = &    \sum_{\substack{A\subset \{1,2,\cdots , n\}\\ |A| = k-1}} \prod_{i\in A} b(i;n) \frac{\lambda_n -\sum_{j\in A} b(j;n)}{k}  
\label{ind_3} \\[.3cm]
& \geq &   \frac{\left(\lambda_n - (k-1) m_n\right)}{k}\,  \sum_{\substack{A\subset \{1,2,\cdots , n\}\\ |A| = k-1}} \prod_{i\in A} b(i;n). \nonumber
\end{eqnarray}

By repeating similar calculations as above,  one obtains
\begin{equation} \nonumber
\sum_{\substack{B\subset \{1,2,\cdots , n\}\\ |B| = k}} \prod_{i\in B} b(i;n) \geq  \frac{\prod\limits_{j=0}^{k-1} \left(\lambda_n - j m_n\right)}{k!},
\end{equation}
and
\begin{eqnarray} 
P\left(V_n = k\right) &  \geq &  P\left(V_n =0\right) \> \frac{\prod\limits_{j=0}^{k-1} \left(\lambda_n - j m_n\right)}{k!} \geq   P\left(V_n =0\right)\, \frac{(\lambda_n - km_n)^k}{k!} \nonumber \\[.2cm]
& = &  P\left(V_n =0\right)\, \frac{\lambda_n^k}{k!}\,\left(1 - \frac{km_n}{\lambda_n}\right)^k 
 = P\left(V_n =0\right)\, \frac{\lambda_n^k}{k!}\,\left(1 - \frac{\frac{k^2m_n}{\lambda_n}}{k}\right)^k
\nonumber
\end{eqnarray}


\begin{equation}
\label{lb2}
\hspace{-3.3cm} \geq P\left(V_n =0\right)\, \frac{\lambda_n^k}{k!}\,\left(1 - \frac{k^2m_n}{\lambda_n}\right)
\end{equation}

for sufficiently large $n$, since ${\displaystyle 
\left(1 - \frac{x}{k}\right)^k
}$ is increasing in $k$ if $0 < x < 1$.
\vspace{.2cm}

 In order to derive an upper bound for $P(V_n =k)$,
 
\[
\begin{split}
P\left(V_n = k\right)  & =  \sum_{\substack{B\subset \{1,2,\cdots , n\}\\ |B| = k}} \prod_{i\in B} b(i;n)\, \prod_{j\in B^c} (1-b(j;n))\\
 & =   \sum_{\substack{B\subset \{1,2,\cdots , n\}\\ |B| = k}} \prod_{i\in B} \frac{b(i;n)}{1-b(i;n)}\, \prod_{j=1}^{n} (1-b(j;n))\\
 & \leq  P(V_n=0) \sum_{\substack{B\subset \{1,2,\cdots , n\}\\ |B| = k}} \prod_{i\in B} \frac{b(i;n)}{1-m_n}\\
 & =  P(V_n=0) \cdot \frac{1}{(1-m_n)^k} \sum_{\substack{B\subset \{1,2,\cdots , n\}\\ |B| = k}} \prod_{i\in B} b(i;n).
\end{split}
\]

 From (\ref{ind_3}),

\[
\sum_{\substack{B\subset \{1,2,\cdots , n\}\\ |B| = k}} \prod_{i\in B} b(i;n) \leq  \frac{\lambda_n}{k}\,  \sum_{\substack{A\subset \{1,2,\cdots , n\}\\ |A| = k-1}} \prod_{i\in A} b(i;n),
\]
and by repeating the same calculations,
\[
\sum_{\substack{A\subset \{1,2,\cdots , n\}\\ |A| = k}} \prod_{i\in A} b(i;n) \leq   \frac{\lambda_n^k}{k!}.
\]

 Since for $0 \leq x  < 1$
$$\frac{1}{(1-\frac{x}{k})^k} \leq e^{x} \leq 1 + \frac{x}{1-x},$$
the above calculation gives, for $x=k\,m_n$,
\begin{eqnarray*}
P\left(V_n = k\right)  \leq P(V_n=0) \cdot \frac{\lambda_n^k}{k!} \left(1 + \frac{k m_n}{1-km_n}\right)
\end{eqnarray*}

 if $n$ is sufficiently large. We now have

\begin{eqnarray}\label{bounds}
-\epsilon_1(k,m_n) \leq \frac{P\left(V_n = k\right)}{P(V_n=0) \lambda_n^{k}/k!} -1 \leq \epsilon_2(k,m_n),
\end{eqnarray}

 where ${\displaystyle \epsilon_1(k,m_n) = \frac{k^2 m_n}{\lambda_n}}$ and ${\displaystyle \epsilon_2(k,m_n) = \frac{km_n}{1-km_n}}$.  Thus, for $i=1,2$, as $n \to + \infty$,
$$\sup_{\{k: \> k^2 \leq \phi(n)\}} \epsilon_i (k,m_n)\rightarrow 0.$$

 This, and the validity of the result in the case $k=0$ completes the proof.

\end{proof}

\begin{rem} Theorem \ref{Independent_1} holds without any assumption on the finiteness of $\lambda$. Estimates for $m_n$ and simple additional information about $\lambda_n$, e.g. $\lambda_n \geq c > 0$ for all $n$, are sufficient to obtain accurate asymptotics for $V_n$ via (\ref{bounds}).
\end{rem}

 If  $\lambda < +\infty$, then \[\lim_{n\to +\infty} P(V_n=0) = \lim\limits_{n \to +\infty} e^{\alpha_n} = e^{- \lambda},\] and thus, Theorem \ref{Independent_1} implies a well known convergence result  which we state as a corollary.

\begin{cor} \label{corollary} Let $V_{n}$ be a random variable as defined in (\ref{v}) where the $X_{i,n}$ are independent.   If the following hold,
\begin{enumerate}
\item[(A1)]
${\displaystyle
\lim_{n\rightarrow +\infty}m_n= 0,}$

\item[(A2)]  ${\displaystyle \lim_{n\rightarrow +\infty} \lambda_n =\lambda < +\infty}$,

\end{enumerate}

then for any function $\phi(n)$ such that $\phi(n) m_n \to 0$  as $n \to + \infty,$

 \[
 \sup_{\{k:\,k^2\leq \> \phi(n)\}} \left|\frac{P\left(V_n = k\right)}{e^{-\lambda} \lambda^{k}/k!} - 1 \right|\longrightarrow 0.
\]

\end{cor}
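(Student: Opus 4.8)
The plan is to deduce the Corollary directly from Theorem \ref{Independent_1} by replacing the two $n$-dependent normalisers $P(V_n=0)$ and $\lambda_n^{k}$ with their limits $e^{-\lambda}$ and $\lambda^{k}$. Since (A1) together with $\lambda<+\infty$ are precisely the hypotheses of Theorem \ref{Independent_1} (specialised from $\lambda\le+\infty$ to $\lambda$ finite), I would first invoke that theorem to obtain, uniformly over $\{k:k^{2}\le\phi(n)\}$,
\[
\frac{P(V_n=k)}{P(V_n=0)\,\lambda_n^{k}/k!}\longrightarrow 1 .
\]
I would then write the target quantity as a product of three factors,
\[
\frac{P(V_n=k)}{e^{-\lambda}\,\lambda^{k}/k!}
=\frac{P(V_n=k)}{P(V_n=0)\,\lambda_n^{k}/k!}\;\cdot\;\frac{P(V_n=0)}{e^{-\lambda}}\;\cdot\;\Big(\frac{\lambda_n}{\lambda}\Big)^{k},
\]
so that it suffices to show that each factor tends to $1$ uniformly over the admissible range of $k$.

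The first factor is handled by Theorem \ref{Independent_1}. The second factor does not depend on $k$, so its convergence is automatically uniform, and it is settled by the observation recorded just before the Corollary: $P(V_n=0)=\prod_{i}(1-b(i;n))=e^{\alpha_n}$, and under (A1)--(A2) one has $\sum_i b(i;n)^2\le m_n\lambda_n\to 0$, whence the expansion $\alpha_n=-\lambda_n-\tfrac12\sum_i b(i;n)^2-\cdots$ gives $\alpha_n+\lambda_n\to 0$ and therefore $\alpha_n\to-\lambda$, i.e. $P(V_n=0)\to e^{-\lambda}$.

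The main obstacle is the third factor $(\lambda_n/\lambda)^{k}$. Because $k$ is allowed to grow up to $\sqrt{\phi(n)}$, the bare pointwise convergence $\lambda_n\to\lambda$ is not sufficient: taking logarithms, forcing $k\,\ln(\lambda_n/\lambda)\to 0$ uniformly over $\{k:k^2\le\phi(n)\}$ requires $\sqrt{\phi(n)}\,|\lambda_n-\lambda|\to 0$, a condition on the rate at which $\lambda_n$ approaches $\lambda$ relative to the width of the window $\phi(n)$. This is the delicate point, since it is not a consequence of $\phi(n)m_n\to 0$ alone (one can choose $\lambda_n\to\lambda$ arbitrarily slowly while keeping $m_n\to 0$), and I would control it either by imposing $\sqrt{\phi(n)}\,|\lambda_n-\lambda|\to 0$ as an additional requirement on $\phi$, or by restricting to bounded or sufficiently slowly growing $k$ — the classical Poisson regime — where $\lambda_n^{k}\to\lambda^{k}$ holds trivially. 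Once this factor is controlled, the product of the three factors tends to $1$ uniformly, and together with the immediate $k=0$ case this completes the argument.
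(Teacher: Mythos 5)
Your three-factor decomposition is exactly the paper's (implicit) argument: the paper's entire proof of Corollary \ref{corollary} is the sentence preceding it --- $\lambda<+\infty$ gives $P(V_n=0)=e^{\alpha_n}\to e^{-\lambda}$ --- followed by the assertion that Theorem \ref{Independent_1} then yields the statement. That takes care of your first two factors (your justification of the second via $\sum_i b^2(i;n)\le m_n\lambda_n\to 0$ is a correct proof of what the paper merely asserts), but the paper says nothing at all about your third factor $\left(\lambda_n/\lambda\right)^{k}$.

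The obstacle you raise there is genuine: under (A1), (A2) and $\phi(n)m_n\to 0$ alone, $\left(\lambda_n/\lambda\right)^{k}$ need not tend to $1$ uniformly on $\{k:k^2\le\phi(n)\}$, so the Corollary does not follow from Theorem \ref{Independent_1} and is in fact false as stated. Concretely, take $b(i;n)=(\lambda+\varepsilon_n)/n$ for all $i$, with $\varepsilon_n=1/\ln n$, and $\phi(n)=\sqrt{n}$; then $m_n\to 0$, $\lambda_n=\lambda+\varepsilon_n\to\lambda<+\infty$, and $\phi(n)m_n\to 0$, but for $k=\lfloor n^{1/4}\rfloor$ (so $k^2\le\phi(n)$) the paper's own Theorem \ref{Independent_1} gives, uniformly in this range,
\[
\frac{P\left(V_n=k\right)}{e^{-\lambda}\lambda^{k}/k!}
=(1+o(1))\,\frac{P(V_n=0)}{e^{-\lambda}}\left(1+\frac{\varepsilon_n}{\lambda}\right)^{k}\longrightarrow +\infty,
\]
since $P(V_n=0)\to e^{-\lambda}$ while $k\ln\left(1+\tfrac{\varepsilon_n}{\lambda}\right)\geq \frac{k\varepsilon_n}{\lambda+\varepsilon_n}\sim \frac{n^{1/4}}{\lambda\ln n}\to\infty$; hence the supremum in the Corollary diverges. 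So the gap you identified lies in the paper's statement and proof, not in your argument, and your proposed repairs --- adding the rate hypothesis $\sqrt{\phi(n)}\,|\lambda_n-\lambda|\to 0$, or reading the Corollary as the classical Poisson limit over bounded (or sufficiently slowly growing) $k$, for which $\lambda_n^{k}\to\lambda^{k}$ is automatic --- are exactly what is needed to make it true.
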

\vspace{.2cm}

In the case one assumes $\lambda = +\infty$, we obtain a local limit theorem replacing the condition $m_n \to 0$ by the weaker assumption that $m_n$ is bounded away from $1$.

\begin{thm} \label{Independent_2}
Let $V_{n}$ be a random variable as defined in (\ref{v}) where the $X_{i,n}$ are independent, and $m_n < \beta < 1$ for all $n$.

 If 

\begin{enumerate}
\item[(A3)]
\begin{equation*}
\lim_{n\rightarrow +\infty}\lambda_n =  +\infty,
\end{equation*}
\end{enumerate}
then for any function $\phi(n)$ such that $\frac{\phi(n)}{\lambda_n} \to 0$  as $n \to +\infty$,  n$\rightarrow +\infty $, 
 $$ \sup_{\{k:\,k^2 \leq \> \phi(n)\}} \left|\frac{P\left(V_n = k\right)}{P(V_n=0) \beta_n^{k}/k!} - 1 \right|\longrightarrow 0
 $$

\end{thm}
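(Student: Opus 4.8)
The plan is to mirror the proof of Theorem \ref{Independent_1}, but to replace the summands $b(i;n)$ by the ratios $c(i;n) := b(i;n)/(1-b(i;n))$, whose sum is exactly $\beta_n$. The starting point is the \emph{exact} factorization obtained by pulling $\prod_{j=1}^n(1-b(j;n)) = P(V_n=0)$ out of every term of $P(V_n=k)$:
$$P(V_n = k) = P(V_n=0) \sum_{\substack{B\subset\{1,\dots,n\}\\ |B|=k}} \prod_{i\in B} c(i;n).$$
Thus everything reduces to sandwiching the elementary symmetric sum $S_k := \sum_{|B|=k}\prod_{i\in B}c(i;n)$ between $\frac{\beta_n^k}{k!}(1-\varepsilon_n)$ and $\frac{\beta_n^k}{k!}$ for a suitable error $\varepsilon_n\to 0$. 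Note that this step is cleaner than in Theorem \ref{Independent_1}, where only an inequality was available; here the factorization is an identity.

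For the upper bound I would expand $\beta_n^k = \bigl(\sum_i c(i;n)\bigr)^k$ by the multinomial theorem and keep only the terms with distinct indices; since all $c(i;n)\ge 0$, this gives $k!\,S_k\le\beta_n^k$, i.e. $S_k\le\beta_n^k/k!$, exactly as in the upper bound of Theorem \ref{Independent_1}. For the lower bound I would run the same recursion as in (\ref{ind_3}): writing $S_k=\sum_{|A|=k-1}\prod_{i\in A}c(i;n)\cdot\frac{\beta_n-\sum_{j\in A}c(j;n)}{k}$ and using the uniform bound $c(j;n)\le\mu_n:=m_n/(1-m_n)$ for every $j$, one gets $\sum_{j\in A}c(j;n)\le(k-1)\mu_n$ and hence $S_k\ge\frac{\beta_n-(k-1)\mu_n}{k}\,S_{k-1}$. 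Iterating down to $S_0=1$ yields $S_k\ge\frac{1}{k!}\prod_{j=0}^{k-1}(\beta_n-j\mu_n)\ge\frac{(\beta_n-k\mu_n)^k}{k!}=\frac{\beta_n^k}{k!}\bigl(1-\tfrac{k\mu_n}{\beta_n}\bigr)^k$, and Bernoulli's inequality (equivalently the monotonicity of $(1-x/k)^k$ invoked in Theorem \ref{Independent_1}) gives $S_k\ge\frac{\beta_n^k}{k!}\bigl(1-\tfrac{k^2\mu_n}{\beta_n}\bigr)$.

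Combining the two bounds, the ratio in the statement satisfies $\bigl|\frac{P(V_n=k)}{P(V_n=0)\beta_n^k/k!}-1\bigr|\le\frac{k^2\mu_n}{\beta_n}$, and the crux is to show this tends to $0$ uniformly over $\{k:k^2\le\phi(n)\}$. This is where hypothesis (A3) and the condition $m_n<\beta<1$ enter, through two elementary facts: first, since $c(i;n)\ge b(i;n)$, we have $\beta_n=\sum_i c(i;n)\ge\sum_i b(i;n)=\lambda_n$, so $1/\beta_n\le 1/\lambda_n$; second, $m_n<\beta$ forces the maximal ratio to be bounded, $\mu_n\le\beta/(1-\beta)=:C$. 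Hence $\frac{k^2\mu_n}{\beta_n}\le\frac{C\,\phi(n)}{\lambda_n}\to 0$ by assumption, uniformly in $k$. This also renders the Bernoulli step legitimate for large $n$, since the argument $\frac{k^2\mu_n}{\beta_n}$ is eventually below $1$. Together with the trivial case $k=0$ this completes the argument.

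The only genuinely new ingredient relative to Theorem \ref{Independent_1} is the passage from the natural parameter $\beta_n$ appearing in the exact factorization to the growth condition phrased in terms of $\lambda_n$; the inequality $\beta_n\ge\lambda_n$ together with the uniform bound $\mu_n\le C$ is precisely what converts $\phi(n)/\lambda_n\to 0$ into the required decay of the error. Accordingly, I expect the main point requiring care to be bookkeeping in the recursion, namely ensuring that $c(j;n)\le\mu_n$ is applied uniformly and that every factor $\beta_n-j\mu_n$ stays positive over the whole range $k^2\le\phi(n)$; this last point again follows from $k\mu_n\le C\sqrt{\phi(n)}=o(\lambda_n)\le o(\beta_n)$, rather than from any deeper analytic difficulty.
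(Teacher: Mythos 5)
Your proposal is correct and follows essentially the same route as the paper's own proof: the exact factorization through $P(V_n=0)$, the recursion for the elementary symmetric sums of $b(i;n)/(1-b(i;n))$ with a uniform bound on those ratios, Bernoulli's inequality (equivalently the monotonicity of $(1-x/k)^k$), and the conversion to the hypothesis via $\beta_n \geq \lambda_n$ and $m_n/(1-m_n) \leq \beta/(1-\beta)$. The only cosmetic differences are that you obtain the upper bound $S_k \leq \beta_n^k/k!$ by multinomial expansion rather than by iterating the recursion, and you carry the slightly sharper error term $k^2 m_n/((1-m_n)\beta_n)$ before bounding it by $\beta\,\phi(n)/((1-\beta)\lambda_n)$ at the end, whereas the paper inserts the bounds $\beta/(1-\beta)$ and $\lambda_n \leq \beta_n$ directly inside the recursion.
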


\begin{proof}

\[
\begin{split}
P & \left(V_n = k\right)   =  \sum_{\substack{B\subset \{1,2,\cdots , n\}\\ |B| = k}} \prod_{i\in B} b(i;n)\, \prod_{j\in B^c} (1-b(j;n))\\
 & =   \sum_{\substack{B\subset \{1,2,\cdots , n\}\\ |B| = k}} \prod_{i\in B} \frac{b(i;n)}{1-b(i;n)}\, \prod_{j=1}^{n} (1-b(j;n))\\
 & =  P(V_{n}=0) \sum_{\substack{B\subset \{1,2,\cdots , n\}\\ |B| = k}} \prod_{i\in B} \frac{b(i;n)}{1-b(i;n)}\\
 & =  P(V_n=0)    \sum_{\substack{A\subset \{1,2,\cdots , n\}\\ |A| = k-1}} \prod_{i\in A} \frac{b(i;n)}{1-b(i;n)}\,  \frac{\sum_{j\in A^c} \frac{b(j;n)}{1-b(j;n)}}{k}\\
 & =  P(V_n=0)    \sum_{\substack{A\subset \{1,2,\cdots , n\}\\ |A| = k-1}} \prod_{i\in A} \frac{b(i;n)}{1-b(i;n)} \frac{\beta_n -\sum_{i \in A} \frac{b(i;n)}{1-b(i;n)}}{k}.
\end{split}
\]

 To obtain a lower bound, we bound the above sums as follows,

\[
\begin{split}
\sum_{\substack{A\subset \{1,2,\cdots , n\}\\ |A| = k-1}} & \prod_{i\in A} \frac{b(i;n)}{1-b(i;n)} \frac{\beta_n -\sum_{i \in A} \frac{b(i;n)}{1-b(i;n)}}{k}\\
& \geq  \> \frac{1}{k}\left(\beta_n - \frac{(k-1) \beta}{(1-\beta)}\right) \sum_{\substack{A\subset \{1,2,\cdots , n\}\\ |A| = k-1}} \prod_{i\in A} \frac{b(i;n)}{1-b(i;n)}\\
&  =   \> \frac{\beta_n}{k}\left(1 - \frac{(k-1)\beta}{\beta_n(1-\beta)}\right) \sum_{\substack{A\subset \{1,2,\cdots , n\}\\ |A| = k-1}} \prod_{i\in A} \frac{b(i;n)}{1-b(i;n)}\\
&  \geq    \> \frac{\beta_n}{k}\left(1 - \frac{(k-1)\beta}{\lambda_n(1-\beta)}\right) \sum_{\substack{A\subset \{1,2,\cdots , n\}\\ |A| = k-1}} \prod_{i\in A} \frac{b(i;n)}{1-b(i;n)}.
\end{split}
\]

 Following similar calculations as those leading up to (\ref{lb2}), we obtain

\[
 \sum_{\substack{B\subset \{1,2,\cdots , n\}\\ |B| = k}} \prod_{i\in B} \frac{b(i;n)}{1-b(i;n)} \geq 
  \frac{\beta_n^k}{k!}\, \left(1- \, \frac{k\, \beta}{\lambda_n\, (1- \beta)}\right)^k .
\]

 Hence, by exploiting  again the increasing property of $(1-\frac{x}{k})^k$ for $0<x<1$ as a function of  $k$,

\[
\begin{split}
P(V_n =k)  \geq & P(V_n=0) \frac{\beta_n^k}{k!}\left(1 - \frac{\frac{k^2\beta}{\lambda_n(1-\beta)}}{k}\right)^k\\
 \geq  & P(V_n=0) \frac{\beta_n^k}{k!}\left(1 - \frac{k^2\beta}{\lambda_n(1-\beta)}\right).
 \end{split}
\]

 To obtain an upper bound,

\[
\begin{split}
P(V_n=0)   & \sum_{\substack{A\subset \{1,2,\cdots , n\}\\ |A| = k-1}} \prod_{i\in A} \frac{b(i;n)}{1-b(i;n)}  \frac{\beta_n -\sum_{i \in A} \frac{b(i;n)}{1-b(i;n)}}{k}\\
& \leq P(V_n=0) \> \frac{\beta_n}{k} \sum_{\substack{A\subset \{1,2,\cdots , n\}\\ |A| = k-1}} \prod_{i\in A} \frac{b(i;n)}{1-b(i;n)}. 
\end{split}
\]

 So,
\[
P(V_n=k) \leq  P(V_n=0) \> \frac{\beta_n^k}{k!}.
\]

 We now have

\begin{eqnarray*} \nonumber 
-\epsilon(k,\lambda_n) \leq \frac{P\left(V_n = k\right)}{P(V_n=0) \beta_n^{k}/k!} -1 \leq 0,
\end{eqnarray*}

 where

\[
\epsilon(k,\lambda_n) = \frac{k^2\beta}{\lambda_n(1-\beta)}.
\]

 For any given $\phi (n)$ such that ${\displaystyle  \phi(n)/\lambda_n \longrightarrow 0}$ as $n\rightarrow +\infty$,

\[
\lim_{n\rightarrow +\infty} \sup_{\{k: \> k^2 \leq \phi(n)\}} \> \epsilon(k,\lambda_n) \to 0.
\]

 This and the validity of our result in the case $k=0$ completes the proof.

\end{proof}

\begin{rem} Since the only assumption on the behavior of $m_n$ is to be bounded away from 1, Theorem \ref{Independent_2}  gives a new local  limit theorem for arbitrary sums of independent Bernoulli random variables with infinite expectation. Note that the range of values of $k$ for which  Theorem \ref{Independent_2} holds differs from the range of values in (\ref{intro-2}) and (\ref{intro-1}). Hence our result provides new limiting results  in the case of sums of Bernoulli random variables.
\end{rem}

We single out an interesting special case of Theorems \ref{Independent_1} and \ref{Independent_2}, where exact Poisson asymptotic behavior is obtained.

\begin{thm} \label{Independent_3}
Let $V_{n}$ be a random variable as defined in (\ref{v}) where the $X_{i,n}$ are independent. If \begin{enumerate}
\item[(A3)]  ${\displaystyle \lim_{n\rightarrow +\infty} \lambda_n = +\infty}$,

\item[(A4)]  ${\displaystyle \lim\limits_{n \to \infty} \sum_{i=1}^{n} b^2(i;n)=0}$,

\end{enumerate}

 then $m_n \to 0$, and for every function $\phi(n)$ such that $\phi(n)m_n \to 0$ as $n  \to +\infty$, 
 $$ \sup_{\{k:\,k^2 \leq \phi(n)\}} \left|\frac{P\left(V_n = k\right)}{e^{-\lambda_n} \lambda_n^{k}/k!} - 1 \right|\longrightarrow 0.
 $$

\end{thm}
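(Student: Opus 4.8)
The plan is to reduce the statement to Theorem~\ref{Independent_1} together with a single, $k$-independent comparison of prefactors, namely $P(V_n=0)\sim e^{-\lambda_n}$.

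First I would dispose of the auxiliary claim that $m_n\to 0$. Since $m_n=\max_i b(i;n)$ is the largest of the nonnegative $b(i;n)$, we have $m_n^2=\max_i b^2(i;n)\le \sum_{i=1}^n b^2(i;n)$, and the right-hand side tends to $0$ by (A4); hence $m_n\to 0$. In particular assumption (A1) of Theorem~\ref{Independent_1} now holds, while (A3) provides $\lambda=+\infty$, which is permitted in Theorem~\ref{Independent_1}. Thus Theorem~\ref{Independent_1} applies verbatim and yields, for any $\phi(n)$ with $\phi(n)m_n\to 0$,
$$\sup_{\{k:\,k^2\le\phi(n)\}}\left|\frac{P(V_n=k)}{P(V_n=0)\,\lambda_n^k/k!}-1\right|\longrightarrow 0.$$

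The core of the argument is the comparison $P(V_n=0)\sim e^{-\lambda_n}$. Writing $P(V_n=0)=\prod_{i=1}^n(1-b(i;n))=e^{\alpha_n}$, this is equivalent to $\alpha_n+\lambda_n\to 0$. Expanding the logarithm, for $0\le x\le m_n<1$ one has $0\le -\ln(1-x)-x=\sum_{j\ge 2}x^j/j\le \tfrac{x^2}{2(1-x)}$, so summing over $i$ gives
$$|\alpha_n+\lambda_n|=\sum_{i=1}^n\Bigl(-\ln(1-b(i;n))-b(i;n)\Bigr)\le\frac{1}{2(1-m_n)}\sum_{i=1}^n b^2(i;n).$$
The right-hand side tends to $0$ because $\sum_i b^2(i;n)\to 0$ by (A4) and $m_n\to 0$ from the first step; hence $\alpha_n+\lambda_n\to 0$ and $P(V_n=0)/e^{-\lambda_n}=e^{\alpha_n+\lambda_n}\to 1$.

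Finally I would combine the two facts multiplicatively. For every $k$ with $k^2\le\phi(n)$,
$$\frac{P(V_n=k)}{e^{-\lambda_n}\lambda_n^k/k!}=\frac{P(V_n=k)}{P(V_n=0)\,\lambda_n^k/k!}\cdot\frac{P(V_n=0)}{e^{-\lambda_n}},$$
where the first factor converges to $1$ uniformly over the admissible range of $k$ by Theorem~\ref{Independent_1}, and the second factor converges to $1$ independently of $k$ by the comparison above. A product of a uniformly convergent factor and a $k$-free convergent factor is again uniformly convergent, so the supremum of the displayed ratio over $\{k:\,k^2\le\phi(n)\}$ tends to $1$, which is the claim. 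The only genuinely new step beyond Theorem~\ref{Independent_1} is the prefactor comparison, and the crux there is that controlling the second-order term $\sum_i b^2(i;n)$ in the logarithmic expansion requires exactly hypothesis (A4); without it the regime $\lambda_n\to+\infty$ would make $\alpha_n$ and $-\lambda_n$ each diverge while their difference need not vanish, so that $P(V_n=0)$ could not be replaced by $e^{-\lambda_n}$.
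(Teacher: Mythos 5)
Your proposal is correct and follows essentially the same route as the paper: deduce $m_n\to 0$ from $m_n^2\le\sum_{i=1}^n b^2(i;n)$, control $P(V_n=0)/e^{-\lambda_n}$ by a Taylor expansion of $\ln(1-x)$ whose second-order term is bounded by $\sum_{i=1}^n b^2(i;n)$ (the paper sandwiches this ratio between $\exp\bigl(-\sum_{i=1}^n b^2(i;n)\bigr)$ and $1$), and then combine this $k$-free prefactor comparison with the uniform ratio estimate of Theorem~\ref{Independent_1}. The only cosmetic difference is that you invoke Theorem~\ref{Independent_1} as a black box and argue multiplicatively, whereas the paper re-uses the explicit bounds $\epsilon_1(k,m_n)$, $\epsilon_2(k,m_n)$ from inequality (\ref{bounds}) in that theorem's proof; the mathematical content is identical.
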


\begin{proof}
$(A4)$ and the inequality 
\begin{equation} \nonumber 
\sum_{i=1}^n b^2(i;n) \geq m_n^2
\end{equation}
give that $\lim\limits_{n \to \infty} m_n =0$, and therefore in our proof we can assume that  for all $n$, and for $i$ such that $1 \leq i \leq n$,  $0 \leq b(i;n) < 1/2$. 

 We first consider  the case $k=0$. By using the Taylor series expansion of $\ln (1-x)$, one  obtains the following bounds for $0 < x < 1/2$,

\begin{equation} \label{bounds1}
-x-x^2  \leq \ln (1-x) \leq - x.
\end{equation}

 Using (\ref{bounds1}) with $x = b(i;n)$,  one gets

\begin{eqnarray} \nonumber 
\exp(-\sum\limits_{i=1}^n b^2(i;n)) \leq \frac{P\left(V_{n}=0\right)}{e^{-\, \lambda_n}}  & \leq & \frac{\exp({-\sum\limits_{i=1}^n b(i;n)})}{e^{-\, \lambda_n}} =1.
\end{eqnarray}
So we have that

\begin{eqnarray}\label{panino}
\exp(-\sum\limits_{i=1}^n b^2(i;n)) \leq \frac{P(V_n=0)}{e^{-\lambda_n}} \leq 1.
\end{eqnarray}

 It follows from $(A4)$ that both bounds go to 1 as $n$ goes to infinity, and thus, the convergence holds for the case $k=0$.

 Since $m_n \to 0$, for $k \geq 1$, the proof of Theorem \ref{Independent_1} applies, and (\ref{bounds}) and (\ref{panino}) give

\begin{eqnarray*}
1-
\epsilon_{1}(k,m_n)  \leq \frac{P(V_n=k)}{e^{-\lambda_n}\frac{\lambda_n^k}{k!}} \leq \exp(\sum\limits_{i=1}^n b^2(i;n))\left(1+ \epsilon_{2}(k,m_n)\right) .
\end{eqnarray*}

 The result follows from the above inequalities.

\end{proof}

\subsection{Dependent Case}

In this section we extend our previous results to sums of dependent Bernoulli random variables which satisfy a scheme described by  \cite{Seb}. 

 We use the notation introduced in the Preliminaries if the random variables are independent and modify this notation by adding '\hspace{0.05cm} $\tilde{}$  \hspace{0.05cm}' when the the random variables are dependent. 
Let

\begin{equation*}
S_{k,n} = \sum_{\substack{(i_{1},i_{2},...,i_{k}) \in B\\ B\subset \{1,2,\cdots , n\}\\ |B| = k}} b(i_{1};n)b(i_{2};n)\cdots b(i_{k};n)
\end{equation*}

 and

\begin{equation} \nonumber  
\tilde{S}_{k,n} = \sum_{\substack{(i_{1},i_{2},...,i_{k}) \in B\\ B\subset \{1,2,\cdots , n\}\\ |B| = k}} \tilde{b}(i_{1},i_{2},...,i_{k};n),
\end{equation}

 where the sum is taken over all collections of $k$ distinct indices.

 We consider \textit{rare sets}, $I_k(n)$, to be particular collections of $k$ distinct indices.

\begin{thm} \label{Dependent}
let $\tilde{V}_{n}$ be a random variable defined as in (\ref{v}) and let $V_n$ be a random variable defined as in (\ref{v}) where the $X_{i,n}$ are independent. If the following assumptions are satisfied:

\begin{enumerate}
\item[(B1)] 

\[
\lim\limits_{\substack{n \to +\infty}} \frac{\tilde{b}(i_{1},i_{2},...,i_{k};n)}{b(i_{1};n)b(i_{2};n)\cdots b(i_{k};n)} = 1 \hspace{0.5cm }\textnormal{for all} \hspace{0.2cm} (i_{1},i_{2},...,i_{k}) \in I^{c}_{k}(n)
\]

\item[(B2)] 

\[
\lim_{n \to +\infty} \frac{\tilde{S}_{k,n}}{\sum\limits_{\substack{I_{k}(n)^{c}}} \tilde{b}(i_{1},i_{2},...,i_{k};n)} = 1 
\]

\item[(B3)]

\[
\lim_{n \to +\infty}  \frac{S_{k,n}}{\sum\limits_{\substack{I_{k}(n)^{c}}} b(i_{1};n)b(i_{2};n)\cdots b(i_{k};n)} = 1,
\]

\end{enumerate}

  where the limits (B1), (B2), and (B3) are uniform in $k$ and in $(i_1,i_2,\cdots, i_k) \in I_k^c(n)$ , then

\begin{equation} \label{dep}
\lim_{n \to +\infty} \frac{P(\tilde{V}_{n}=k)}{P(V_n=k)} = 1
\end{equation}

 uniformly in $k$. Moreover Theorems \ref{Independent_1}, \ref{Independent_2}, and \ref{Independent_3} apply with $V_n$ replaced by  $\tilde{V}_n$.

\end{thm}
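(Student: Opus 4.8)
The plan is to reduce everything to a comparison of the Bonferroni sums $S_{k,n}$ and $\tilde S_{k,n}$, which are precisely the factorial moments $S_{k,n}=\mathbb{E}\binom{V_n}{k}$ and $\tilde S_{k,n}=\mathbb{E}\binom{\tilde V_n}{k}$. For any finite family of events the point masses are recovered from these by the inclusion--exclusion (factorial-moment inversion) identity
\[
P(V_n=k)=\sum_{j=k}^{n}(-1)^{j-k}\binom{j}{k}S_{j,n},
\qquad
P(\tilde V_n=k)=\sum_{j=k}^{n}(-1)^{j-k}\binom{j}{k}\tilde S_{j,n},
\]
which holds with no independence assumption. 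Thus (\ref{dep}) will follow once the two alternating sums are shown to have the same asymptotics, and the natural route is to first prove $\tilde S_{k,n}/S_{k,n}\to 1$ uniformly in $k$ and then transfer this to the point masses.

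The first step is the uniform comparison of the Bonferroni sums. Writing $\hat S_{k,n}=\sum_{I_k^c(n)}b(i_1;n)\cdots b(i_k;n)$ and $\hat{\tilde S}_{k,n}=\sum_{I_k^c(n)}\tilde b(i_1,\ldots,i_k;n)$ for the contributions of the non-rare configurations, I would factor
\[
\frac{\tilde S_{k,n}}{S_{k,n}}
=\frac{\tilde S_{k,n}}{\hat{\tilde S}_{k,n}}\cdot\frac{\hat{\tilde S}_{k,n}}{\hat S_{k,n}}\cdot\frac{\hat S_{k,n}}{S_{k,n}}.
\]
The first factor tends to $1$ uniformly by (B2) and the third by (B3); for the middle factor, (B1) gives that every summand ratio $\tilde b/(b\cdots b)$ lies within $\varepsilon$ of $1$ uniformly over $I_k^c(n)$, so the ratio of the two restricted sums, being a $(b\cdots b)$-weighted average of those termwise ratios with nonnegative weights, also lies within $\varepsilon$ of $1$. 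Since all three estimates are uniform in $k$ by hypothesis, $\sup_k\lvert\tilde S_{k,n}/S_{k,n}-1\rvert\to 0$.

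The second step transfers this to the point masses, and here I expect the real difficulty. Writing $\tilde S_{j,n}=(1+\delta_{j,n})S_{j,n}$ with $\delta_n:=\sup_j\lvert\delta_{j,n}\rvert\to0$, the inversion identity gives
\[
P(\tilde V_n=k)-P(V_n=k)=\sum_{j=k}^{n}(-1)^{j-k}\binom{j}{k}S_{j,n}\,\delta_{j,n}.
\]
The subtlety is that $P(V_n=k)$ is itself produced by massive cancellation in the alternating sum (in the regime $\lambda_n\to+\infty$ one has $S_{j,n}\approx\lambda_n^{j}/j!$, and the alternation manufactures the factor $e^{-\lambda_n}$), so a crude absolute bound $\delta_n\sum_j\binom{j}{k}S_{j,n}=\delta_n\,\mathbb{E}\bigl[\binom{V_n}{k}2^{V_n-k}\bigr]$ is hopeless, overestimating the relative error by a factor of order $e^{2\lambda_n}$. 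The main obstacle is therefore to show that the perturbation $\delta_{j,n}$ does not destroy this cancellation. I would attack it by trying to show the errors are coherent enough --- slowly varying in $j$ and essentially of one sign, as suggested by the termwise nature of (B1) --- so that the perturbed alternating sum still collapses to within a $(1+o(1))$ factor of $P(V_n=k)$; should the uniformity in (B1)--(B3) not suffice for this, one can instead restrict to the growth range $k^2\le\phi(n)$ relevant to the ``moreover'' claim and compare $P(\tilde V_n=k)$ directly to the appropriate Poisson-type weight via the two-sided bounds constructed in the proofs of Theorems \ref{Independent_1} and \ref{Independent_2}, thereby sidestepping the global alternating sum altogether.

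Finally, the ``moreover'' statement is immediate once (\ref{dep}) is in hand. On any range of $k$ on which Theorem \ref{Independent_1}, \ref{Independent_2}, or \ref{Independent_3} already yields that $P(V_n=k)$ divided by its Poisson-type weight tends uniformly to $1$, multiplying by the uniform limit $P(\tilde V_n=k)/P(V_n=k)\to1$ gives the same asymptotic for $\tilde V_n$. Hence no separate argument is needed for the dependent analogues of the three theorems.
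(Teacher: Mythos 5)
Your first step is sound and coincides exactly with the paper's: combining (B1)--(B3), each with its stated uniformity, gives $\sup_k\bigl|\tilde S_{k,n}/S_{k,n}-1\bigr|\to 0$; the paper assembles precisely these three estimates with constants of the form $1\pm\epsilon/3$. But your proposal stalls exactly where you predict ``the real difficulty'': you never prove that uniform closeness of the factorial moments passes through the inclusion--exclusion inversion to yield (\ref{dep}). Neither of your suggested repairs closes this. The ``coherence of errors'' idea has no support in the hypotheses: (B1)--(B3) constrain only the size of $\delta_{j,n}$, not its sign or its variation in $j$, so nothing prevents the perturbations from aligning with the signs $(-1)^{j-k}$ and being amplified by the factor of order $e^{2\lambda_n}$ that you yourself compute. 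The fallback of comparing $P(\tilde V_n=k)$ directly to the Poisson-type weights on $k^2\le\phi(n)$ cannot ``sidestep the alternating sum'': the two-sided bounds in the proofs of Theorems \ref{Independent_1} and \ref{Independent_2} are derived from the product formula for independent Bernoulli probabilities, and $\tilde V_n$ admits no such product structure; the only access to $P(\tilde V_n=k)$ that the hypotheses provide is through the joint probabilities $\tilde b(i_1,\ldots,i_{k};n)$, i.e.\ through the very alternating sum you are trying to avoid. So, as written, the proposal does not establish the theorem.

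That said, your diagnosis of the obstacle is correct, and it is worth recording that the paper's own proof does not overcome it either: after deriving the termwise bounds $(1-\epsilon/3)^2\,S_{k+l,n}\le\tilde S_{k+l,n}\le(1+\epsilon/3)^3\,S_{k+l,n}$ (your step one), the paper concludes in a single line that the same multiplicative bounds hold between $\sum_l(-1)^l\binom{k+l}{k}\tilde S_{k+l,n}$ and $\sum_l(-1)^l\binom{k+l}{k}S_{k+l,n}$. That inference is exactly the one you reject: a termwise bound is reversed when multiplied by $(-1)^l$, so termwise relative error $\epsilon$ controls the relative error of an alternating sum only in the absence of cancellation, whereas here, when $\lambda_n\to+\infty$, the individual terms reach size of order $e^{\lambda_n}\lambda_n^k/k!$ while the sum $P(V_n=k)$ is of order $e^{-\lambda_n}\lambda_n^k/k!$. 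In short, your proposal and the paper's proof share the same first step and the same unfilled hole; the difference is that you flag the hole as the main obstacle while the paper steps over it silently. Closing it would require an argument present in neither text --- for instance, exploiting the Bonferroni truncation inequalities (which sandwich $P(\tilde V_n=k)$ between consecutive partial sums of (\ref{prob})) together with the uniformity in $k$, or some positivity structure of the $\tilde b$'s beyond (B1)--(B3) --- and it is not obvious that the hypotheses as stated suffice.
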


\begin{rem}
There are some differences between the assumptions of  Theorem \ref{Dependent} and those of  Theorem 1  in \cite{Seb}. In our theorem we require  $(B1)-(B3)$ to hold uniformly in $k$, which is not required in Theorem 1 of \cite{Seb}. Since the proof in \cite{Seb} is based on the method of moments, finiteness is a crucial assumption. By requiring uniformity in $k$, we are able to extend Sevast'yanov's  result to  $\lambda = +\infty$. Moreover, since we do not assume that  $m_n \rightarrow  0$, Theorem \ref{Dependent} also applies to sums of dependent Bernoulli random variables and not only to triangular arrays. In the case $0<\lambda < +\infty$, conditions $(B1)-(B3)$ of Theorem \ref{Dependent} are equivalent to conditions (2) and (3) of Theorem 1  in \cite{Seb}. In the setting where $\lambda$ is not assumed to be finite, our formulation is preferable since when $\lambda = + \infty$, conditions $(B1)-(B3)$ can be satisfied even without assuming
\[
\sum\limits_{\substack{I_{k}(n)}} b(i_{1};n)b(i_{2};n)\cdots b(i_{k};n) \longrightarrow 0
\]
or
\[
\sum\limits_{\substack{I_{k}(n)}} \tilde{b}(i_{1},i_{2},...,i_{k};n) \longrightarrow 0
\]
as \(n\rightarrow +\infty.\)

\end{rem}

\begin{proof}

 We use inclusion-exclusion, see \cite{it}, to express

\begin{equation} \label{prob}
P(\tilde{V}_{n}=k) = \sum\limits^{n}_{\substack{l=0}} (-1)^{l}  
\binom{k+l}{k} \tilde{S}_{k+l,n}.
\end{equation}

 We will compare this to

\begin{equation} \label{ind}
P(V_{n}=k) = \sum\limits^{n}_{\substack{l=0}} (-1)^{l}  
\binom{k+l}{k} S_{k+l,n}.
\end{equation}

 We begin by rewriting
\[
\begin{split}
P&(\tilde{V}_{n}=k)  =  \sum\limits^{n}_{\substack{l=0}} (-1)^{l}  
\binom{k+l}{k} \tilde{S}_{k+l,n} \\
& =\sum\limits^{n}_{\substack{l=0}} (-1)^{l}  
\binom{k+l}{k} \left( \sum\limits_{\substack{I^{c}_{k+l}(n)}} 
\tilde{b}(i_{1},i_{2},...,i_{k+l};n) + \sum\limits_{\substack{I_{k+l}(n)}} 
\tilde{b}(i_{1},i_{2},...,i_{k+l};n) \right)\\
&=  \sum\limits^{n}_{\substack{l=0}} (-1)^{l}  \binom{k+l}{k}
\sum\limits_{\substack{I^{c}_{k+l}(n)}} 
\tilde{b}(i_{1},i_{2},...,i_{k+l};n)\left( 1 + \frac{\sum\limits_{\substack{I_{k+l}(n)}} 
\tilde{b}(i_{1},i_{2},...,i_{k+l};n)}{\sum\limits_{\substack{I^{c}_{k+l}(n)}} 
\tilde{b}(i_{1},i_{2},...,i_{k+l};n)} \right).
\end{split}
\]

 Since we can write

\[
\begin{split}
\sum  \limits_{\substack{I^{c}_{k+l}(n)}} & 
\tilde{b}(i_{1},i_{2},...,i_{k+l};n)\\
& = \sum\limits_{\substack{I^{c}_{k+l}(n)}} 
\left ( \frac{\tilde{b}(i_{1},i_{2},...,i_{k+l};n)}{b(i_{1};n)b(i_{2};n)\cdots b(i_{k+l};n)} \cdot b(i_{1};n)b(i_{2};n)\cdots b(i_{k+l};n)\right ),
\end{split}
\]

 it follows from $(B1)$ that for $\epsilon >0$ there exists $n_{0}$ such that for $n \geq n_{0}$ and all $l+k$, 
\[
\begin{split}
 (1-\frac{\epsilon}{3})  \sum\limits_{\substack{I^{c}_{k+l}(n)}} & b(i_{1};n)b(i_{2};n)\cdots b(i_{k+l};n)\\
 & \leq   \sum\limits_{\substack{I^{c}_{k+l}(n)}} 
\tilde{b}(i_{1},i_{2},...,i_{k+l};n)\\
& \leq   (1 + \frac{\epsilon}{3}) \sum\limits_{\substack{I^{c}_{k+l}(n)}} b(i_{1};n)b(i_{2};n)\cdots b(i_{k+l};n).
\end{split}
\]

 From  $(B3)$, it follows that there exists an $n_{1}$ such that for $n\geq n_{1}$ and all $l+k$,

\[
 (1-\frac{\epsilon}{3}) \leq \frac{\sum\limits_{\substack{I^{c}_{k+l}(n)}} b(i_{1};n)b(i_{2};n)\cdots b(i_{k+l};n)}{S_{k+l,n}} \leq (1 + \frac{\epsilon}{3}).
\]

 Lastly, since

\[
\frac{\tilde{S}_{k,n}}{\sum\limits_{\substack{I^{c}_{k+l}(n)}} 
\tilde{b}(i_{1},i_{2},...,i_{k+l};n)}=1+ \frac{\sum\limits_{\substack{I_{k+l}(n)}} 
\tilde{b}(i_{1},i_{2},...,i_{k+l};n)}{\sum\limits_{\substack{I^{c}_{k+l}(n)}} 
\tilde{b}(i_{1},i_{2},...,i_{k+l};n)},
\]

 it follows from $(B2)$ that there exists $n_{2}$ such that for $n \geq n_{2}$ and all $l+k$,

\[
1 \leq 1+ \frac{\sum\limits_{\substack{I_{k+l}(n)}} 
\tilde{b}(i_{1},i_{2},...,i_{k+l};n)}{\sum\limits_{\substack{I^{c}_{k+l}(n)}} 
\tilde{b}(i_{1},i_{2},...,i_{k+l};n)} \leq (1 + \frac{\epsilon}{3}).
\]

 Let $N = \max\{n_{0}, n_{1}, n_{2} \}$. Then for $n \geq N$,
\[
\begin{split}
(1-\frac{\epsilon}{3})^{2} & \sum\limits^{n}_{\substack{l=0}} (-1)^{l}  
\binom{k+l}{k} S_{k+l,n} \leq P(\tilde{V}_{n}=k)\\
&  \leq (1 + \frac{\epsilon}{3})^{3} \sum\limits^{n}_{\substack{l=0}} (-1)^{l}  
\binom{k+l}{k} S_{k+l,n}
\end{split}
\]

 which can be expressed as 

\[
(1-\frac{\epsilon}{3})^{2} P(V_{n}=k) \leq P(\tilde{V}_{n}=k) \leq  (1 + \frac{\epsilon}{3})^{3} P(V_{n}=k). 
\]

 Thus, for all $k$,

\[
(1-\frac{\epsilon}{3})^{2} \leq \frac{P(\tilde{V}_{n}=k)}{P(V_{n}=k) } \leq (1 + \frac{\epsilon}{3})^{3}. 
\]

 Since $\epsilon$ is arbitrary, we have shown that the results in Theorems \ref{Independent_1}, \ref{Independent_2}, and \ref{Independent_3} hold under the respective assumptions for this case of dependence.

\end{proof}

\section{Acknowledgements}

 The first author would like to thank Professor Janos Galambos for suggesting the study of Poisson-type local limit theorems for $V_n$ in the case of infinite expectation.

\end{document}